\numberwithin{equation}{section}
\theoremstyle{plain}
\newtheorem{Th}{Theorem}[section]
\newtheorem{Lemma}[Th]{Lemma}
 \theoremstyle{definition}
\newtheorem{Def}[Th]{Definition}
\newtheorem{?}[Th]{Problem}
  \newcommand{\ab}[1]{\left\vert{#1}\right\vert}
   \newcommand{\co}{\operatorname{conv}}
   \newcommand{\ve}{\operatorname{vert}}
\begin{document}

\title{Sumsets and the convex hull}

\author{M\'at\'e Matolcsi}
\address{Alfr\'ed R\'enyi Institute of Mathematics\\
     Budapest, Pf. 127\\
     H-1364 Hungary\\
     (also at BME Department of Analysis, Budapest, H-1111, Egry J. u. 1)
} \email{matomate@renyi.hu}
\thanks{Supported by Hungarian National Foundation for Scientific Research
(OTKA), Grants No. PF-64061, T-049301, T-047276}

\author{Imre Z. Ruzsa}
\address{Alfr\'ed R\'enyi Institute of Mathematics\\
     Budapest, Pf. 127\\
     H-1364 Hungary
}
\email{ruzsa@renyi.hu}

\thanks{Supported by Hungarian National Foundation for Scientific Research
(OTKA), Grants No. K 61908, K 72731.}

 \subjclass[2000]{11B50, 11B75, 11P70}
    \begin{abstract}
     We extend Freiman's inequality on the cardinality of the sumset of a $d$
dimensional set. We consider different sets related by an inclusion of their
convex hull, and one of them added possibly several times.
    \end{abstract}

     \maketitle

     \section{Introduction}

     The aim of this paper is to give a lower estimate for the cardinality of
certain sumsets in ${\mathbb {R}}^d$.

     We say that a set in ${\mathbb {R}}^d$ is \emph{proper $d$-dimensional}
     if it is not contained in any affine hyperplane.

     Our starting point is the following classical theorem of Freiman.

     \begin{Th}[Freiman\cite{freiman73}, Lemma 1.14] \label{freimand}
     Let $A\subset  {\mathbb {R}}^d$ be a finite set, $|A|=m$. Assume that $A$ is proper
$d$-dimensional. Then
     $$   |A+A| \geq  m(d+1) - {d(d+1)\over 2} .  $$
     \end{Th}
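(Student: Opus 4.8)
The plan is to argue by induction on the cardinality $m=|A|$, with the dimension $d$ held fixed; the base case is $m=d+1$. When $m=d+1$ the set $A$ is affinely independent, and an elementary manipulation of affine dependences shows that the only coincidences among the sums $a+a'$, $a,a'\in A$, are the trivial ones $a+a'=a'+a$. Hence $|A+A|=\binom{m}{2}+m=\binom{d+2}{2}$, which is exactly $m(d+1)-\frac{d(d+1)}{2}$, so the base case holds.

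For the inductive step suppose $m\ge d+2$. The first task is to choose a vertex $v$ of $P:=\co A$ for which $A':=A\setminus\{v\}$ is still proper $d$-dimensional. If $P$ has more than $d+1$ vertices, its vertex set is affinely dependent, so some vertex is an affine combination of the others and can be deleted without shrinking the affine hull. If $P$ is a simplex, then since $m\ge d+2$ there is a point $p\in A$ that is not a vertex; the $d+1$ facet hyperplanes of a simplex have empty common intersection, so $p$ avoids at least one of them, and deleting the vertex opposite that facet keeps the affine span full. Either way such a $v$ exists, and the induction hypothesis gives $|A'+A'|\ge (m-1)(d+1)-\frac{d(d+1)}{2}$.

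It now suffices to exhibit $d+1$ elements of $(A+A)\setminus(A'+A')$. One of them is $2v$: it is an extreme point of $2P=\co(A+A)\supseteq A'+A'$, so the only way to write it as a sum of two points of $P$ is $v+v$, and $v\notin A'$. For the remaining $d$ elements I would use the edges of $P$ at $v$; there are at least $d$ of them. On each such edge $e$ let $w(e)\in A$ be the point of $A\cap e$ nearest to $v$ other than $v$ itself (it exists, since $e$ meets $A$ in at least its two endpoints), so that $v+w(e)\in A+A$. If $v+w(e)=a+b$ with $a,b\in A'$, then the midpoint of $v$ and $w(e)$, which lies in the relative interior of $e$, equals the midpoint of $a$ and $b$; since $e$ is a face of $P$, this forces $a,b\in e$, and then the distances of $a$ and $b$ from $v$ along $e$ are each at least that of $w(e)$ yet must add up to exactly the distance of $w(e)$ from $v$ --- impossible. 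Finally, two distinct edges at $v$ meet only in $v$, so the points $w(e)$ are pairwise distinct, hence so are the sums $v+w(e)$, and none of them equals $2v$. Altogether $|A+A|\ge |A'+A'|+(d+1)\ge m(d+1)-\frac{d(d+1)}{2}$, completing the induction.

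The step I expect to be most delicate is the choice of the removable vertex: while the non-simplicial case of $\co A$ is immediate, the simplicial case relies on the easy-to-forget fact that the facet hyperplanes of a simplex have no common point, and one must check that the deleted point is genuinely a vertex of $\co A$. The edge argument needs care as well --- both the bound ``at least $d$ edges at a vertex of a $d$-polytope'' and the verification that the $d+1$ exhibited sums are new and mutually distinct --- but it is otherwise routine.
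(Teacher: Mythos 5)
Your proof is correct. It is, however, a genuinely different route from the paper's: the paper never proves Theorem \ref{freimand} directly (it is quoted from Freiman) but recovers it, and much more, from Theorem \ref{tobb} via a two-step mechanism --- Lemma \ref{felbontas} decomposes $\co B$ into simplices in regular position, each consecutive one glued along a $(d-1)$-dimensional face, and Lemma \ref{foeset} computes $|A+kB|$ \emph{exactly} when $B$ is a simplex by exploiting uniqueness of barycentric coordinates; the pieces are then summed using the disjointness of the sets $A_i+kB_i$. Your route is instead an induction on $|A|$: you delete a vertex $v$ of $\co A$ chosen so that the affine span stays full (the affinely dependent vertex in the non-simplex case, the vertex opposite a facet hyperplane missed by a non-vertex point in the simplex case), and you exhibit $d+1$ sums in $(A+A)\setminus(A'+A')$, namely $2v$ and $v+w(e)$ for the at least $d$ edges $e$ at $v$; your extremality, face, and minimal-distance arguments correctly rule out collisions, so the count goes through. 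What each approach buys: yours is elementary, self-contained and close in spirit to Freiman's original induction, but it only yields the symmetric inequality for $A+A$; the paper's simplicial decomposition, at the price of the geometric Lemma \ref{felbontas}, gives the exact formula \eqref{s1} in the simplex case and the stronger statements in which one summand may be replaced by $\ve A$ (Theorem \ref{csucs}), the summands may differ with $A\subset\co B$ (Theorem \ref{ketto}), and $B$ may be added $k$ times (Theorem \ref{tobb}), none of which follows immediately from your vertex-deletion induction since deleting a point changes the vertex set of the hull.
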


     We will show that to get this inequality it is sufficient to use the
vertices (extremal points) of $A$.

     \begin{Def} 
     We say that a point $a\in A$ is a \emph{vertex} of a set $A\subset {\mathbb {R}}^d$ if it is not
in the convex hull of $A\setminus \{a\}$. The set of vertices will be denoted by $\ve
A$.
     \end{Def}

     The convex hull of a set $A$ will be denoted by $\co A$.

     \begin{Th} \label{csucs}
     Let $A\subset  {\mathbb {R}}^d$ be a finite set, $|A|=m$. Assume that $A$ is proper
$d$-dimensional, and let $A' = \ve A$,
We have
     $$   |A+A'| \geq  m(d+1) - {d(d+1)\over 2} .  $$
    \end{Th}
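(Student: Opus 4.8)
The plan is to prove, by induction on $|X|$, a slightly more flexible statement that has Theorem~\ref{csucs} as the special case $Y=\ve X$: \emph{if $X\subset{\mathbb {R}}^d$ is finite and proper $d$-dimensional and $Y$ is any set with $\ve X\subseteq Y\subseteq X$, then $|X+Y|\ge (d+1)|X|-\binom{d+1}{2}$.} Passing to this version costs nothing, since $|X+Y|$ is monotone in $Y$ and $Y=\ve X$ is the smallest admissible choice, and it is better suited to induction because deleting a non-vertex of $X$ leaves $\ve X$, and hence the admissibility of $Y$, untouched. For the base case $|X|=d+1$ the set $X$ is affinely independent, so $Y=X$ and $|X+X|=\binom{d+2}{2}=(d+1)^2-\binom{d+1}{2}$, which is exactly the bound; and for any $X$ with $Y=X$ the statement is literally Theorem~\ref{freimand}. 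So from now on $|X|>d+1$ and $Y\subsetneq X$.

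Here every point of $X\setminus Y$ is a non-vertex of $X$, because $\ve X\subseteq Y$. I would pick one such point $a$ and delete it. Since $a\notin\ve X$ we have $\co(X\setminus\{a\})=\co X$, so $\ve(X\setminus\{a\})=\ve X\subseteq Y$, and $Y\subseteq X\setminus\{a\}$; thus $(X\setminus\{a\},Y)$ again satisfies the hypotheses, and the inductive hypothesis yields $|(X\setminus\{a\})+Y|\ge (d+1)(|X|-1)-\binom{d+1}{2}$. Because $(X\setminus\{a\})+Y\subseteq X+Y$, it is enough to find $d+1$ elements of $X+Y$ that do not lie in $(X\setminus\{a\})+Y$. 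Any such element is of the form $a+y$, $y\in Y$, and an elementary check shows that $a+y\notin (X\setminus\{a\})+Y$ precisely when $a+y-u\notin X$ for every $u\in Y\setminus\{y\}$. Hence the whole theorem reduces to what I will call the \emph{gain estimate}: for each non-vertex $a$ of $X$ there are at least $d+1$ points $y\in Y$ such that $a+y-u\notin X$ for all $u\in Y\setminus\{y\}$.

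The gain estimate is the technical heart, and I would approach it through supporting functionals. Fix a vertex $v$ of $\co Y$ and a linear functional $\varphi$ lying in the normal cone of $\co Y$ at $v$, so that $\varphi(v)=\max_{\co Y}\varphi$. If it is possible to choose $\varphi$ with the extra property that $\varphi(a)>\varphi(u)$ for every $u\in Y\setminus\{v\}$ --- i.e.\ $a$ falls in the thin slab just below $v$ cut out by $\varphi$ --- then $\varphi(a+v-u)=\varphi(a)+\varphi(v)-\varphi(u)>\varphi(v)=\max_{\co Y}\varphi$, and since $X\subseteq\co Y$ this forces $a+v-u\notin X$ for all $u\in Y\setminus\{v\}$; so $y=v$ contributes to the gain. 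The point is therefore to produce such a separating functional at $d+1$ distinct vertices of $\co Y$. Since the normal cone at each vertex is $d$-dimensional there is a $d$-parameter family of candidates at each vertex, and in sufficiently generic position (for instance when $a$ lies in the interior of a $d$-simplex spanned by $d+1$ elements of $Y$) one does obtain $d+1$ workable vertices; this is, morally, the same counting that powers Freiman's inequality itself.

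The step I expect to be the genuine obstacle is the degenerate case, in which no separating functional exists at enough vertices. The model example is $Y$ equal to the six vertices of a regular hexagon and $a$ its centre: the identities $p_{i-1}+p_{i+1}=p_i$ force $\varphi(p_{i-1})+\varphi(p_{i+1})=\varphi(p_i)$ for every linear $\varphi$, which makes it impossible to raise $\varphi(a)$ above the second-largest value of $\varphi$ on $Y$, and indeed here $a+p_i\in Y+Y$ for all $i$, so the gain is $0$ --- while $|Y+Y|$ happens to exceed the Freiman bound by enough to still give the desired inequality for $X$. So in such self-replicating configurations the shortfall in the gain is balanced by a surplus of $|(X\setminus\{a\})+Y|$ (or of $|Y+Y|$) over $(d+1)|X\setminus\{a\}|-\binom{d+1}{2}$, and the real work is to run these two inequalities simultaneously: either by strengthening the inductive statement so that it also records the excess over Freiman's bound and tracks how that excess is consumed when a point is re-inserted, or by classifying and disposing of the Freiman equality configurations directly. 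Carrying out this balancing is where I expect the main difficulty to lie.
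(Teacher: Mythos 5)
Your reduction steps are fine (monotonicity in $Y$, the base cases via Theorem~\ref{freimand}, and the observation that deleting a non-vertex $a$ preserves $\ve X$), but the argument then rests entirely on the \emph{gain estimate}: that for every non-vertex $a$ there are at least $d+1$ elements $y\in Y$ with $a+y-u\notin X$ for all $u\in Y\setminus\{y\}$. As your own hexagon example shows, this statement is false: for $X$ the six vertices of a regular hexagon together with its centre $a$, and $Y=\ve X$, the gain is $0$, so the inductive step as formulated simply does not go through. The supporting-functional construction also only works when $a$ lies in the thin slab below a single vertex of $\co Y$ (it needs $\varphi(a)$ to exceed $\varphi(u)$ for \emph{all} other $u\in Y$), which already fails for rather ordinary positions of $a$, not only in exotic degenerate configurations; the claim that ``generic'' position yields $d+1$ workable vertices is not substantiated. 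The proposed repair --- strengthening the induction hypothesis to record the excess of $|X+Y|$ over the Freiman bound and showing that this excess pays for the missing gain --- is exactly the hard part, and it is only announced, not carried out. You give no formulation of the strengthened statement, no argument that the excess present before re-inserting $a$ is at least the deficit, and no treatment of configurations in which several interior points each produce a small gain. So as it stands this is a plan with an acknowledged hole at its centre, not a proof.

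For comparison, the paper avoids any point-by-point accounting. It proves the stronger Theorem~\ref{ketto} (and in fact Theorem~\ref{tobb}): for $A\subset\co B$ with $B$ proper $d$-dimensional, $|A+kB|$ is bounded below; Theorem~\ref{csucs} is the case $B=\ve A$, $k=1$. The proof first computes $|A+kB|$ \emph{exactly} when $B$ is the vertex set of a simplex, using the uniqueness of barycentric coordinates to show that the sets $a+kB$ interact only through $A\cap B$ (Lemma~\ref{foeset}), and then triangulates $\co B$ into simplices in regular position (Lemma~\ref{felbontas}), splits $A$ along this triangulation, and shows the pieces $A_i+kB_i$ are pairwise disjoint; property c) of the triangulation guarantees that all but one piece satisfies the clean case $|A_i\cap B_i|\le 1$, so only one simplex contributes the loss $\binom{d+1}{2}$ (for $k=1$). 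If you want to salvage your incremental approach you would need to prove a quantitative ``excess'' version of the statement robust enough to absorb hexagon-type identities $p_{i-1}+p_{i+1}=p_i$, which is likely to be at least as much work as the decomposition argument.
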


     This can be extended to different summands as follows.

     \begin{Th} \label{ketto}
     Let $A, B\subset  {\mathbb {R}}^d$ be  finite sets, $|A|=m$. Assume that $B$ is
proper $d$-dimensional and  $A \subset  \co B$. We have
     $$   |A+B| \geq  m(d+1) - {d(d+1)\over 2} .  $$
     \end{Th}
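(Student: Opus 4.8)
The plan is to argue by induction on the dimension $d$ (the statement being trivial when $|A|\le 1$, so assume $m\ge 2$).

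For $d=1$, the hypothesis on $B$ just says $|B|\ge 2$ and $A\subseteq[\min B,\max B]=\co B$. The two $m$-element sets $A+\min B$ and $A+\max B$ lie in $A+B$, and since $\max A-\min A\le\max B-\min B$ they can share at most the single point $\max A+\min B=\min A+\max B$; hence $|A+B|\ge|(A+\min B)\cup(A+\max B)|\ge 2m-1$, which is the asserted bound for $d=1$.

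Now let $d\ge 2$ and assume the theorem in dimension $d-1$. Pick a facet $F$ of the polytope $\co B$ and a linear functional $\phi$ with $F=\{x\in\co B:\phi(x)=c\}$ and $\phi\le c$ on $\co B$; then $\phi\le c$ on $A$ and on $B$. Put $H=\{\phi=c\}$, $A_0=A\cap H$, $B_0=B\cap H$, and $k=|A_0|$. Every vertex of $\co B$ lies in $B$, so the facet $F$ is the convex hull of the vertices on it, i.e. $F=\co B_0$; therefore $B_0$ is a proper $(d-1)$-dimensional subset of $H$, while $A_0=A\cap F\subseteq\co B_0$. Applying the induction hypothesis to the pair $(A_0,B_0)$ inside $H$ (trivially valid if $k=0$) gives
\[
|A_0+B_0|\ \ge\ dk-\binom{d}{2}.
\]
Moreover, for $a\in A,\ b\in B$ one has $\phi(a+b)\le 2c$, with equality precisely when $a\in A_0$ and $b\in B_0$; so $A_0+B_0$ is exactly the part of $A+B$ in the hyperplane $\{\phi=2c\}$, and $A+B$ is the disjoint union of $A_0+B_0$ and $(A+B)\cap\{\phi<2c\}$.

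It remains to produce at least $(d+1)m-dk-d$ elements of $A+B$ with $\phi<2c$; since $\binom{d+1}{2}=\binom{d}{2}+d$, this together with the displayed estimate yields $|A+B|\ge(d+1)m-\binom{d+1}{2}$. This lower bound on the "lower layers" is the technical heart of the proof, and I would attack it by a secondary induction (say on $|A|+|B|$): if some point of $B$ is not a vertex of $\co B$ it can be deleted without changing $\co B$, and if $\co B$ is a simplex one computes directly. In the simplex case, writing $A+B=\bigcup_{b\in B}(A+b)$, a barycentric-coordinate check shows that two translates $A+b$ and $A+b'$ meet only in the single point $b+b'$, and only when both $b,b'\in A$, so $|A+B|=(d+1)m-\binom{|A\cap B|}{2}\ge(d+1)m-\binom{d+1}{2}$. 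Otherwise $B$ equals its set of vertices and has at least $d+2$ of them; then with $A_1=A\setminus A_0$, $B_1=B\setminus B_0$, the region $\{\phi<2c\}$ contains $A_1+B$, $A_0+B_1$ and $A_1+B_1$, the smaller instance $(A_1,B)$ supplies the bulk (when $A_0\neq\emptyset$), and one extracts the remaining elements from $A_0+B_1$ by choosing the point of $B_1$ on which $\phi$ is largest, so that the corresponding translate of $A_0$ lies strictly below every sum already counted.

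The main obstacle is exactly this count of $(A+B)\cap\{\phi<2c\}$: the pieces $A_1+B$, $A_0+B_1$ and the additional contributions must be assembled with no overlap, the facet $F$ and the auxiliary point of $B_1$ have to be chosen so that the relevant $\phi$-layers of $A+B$ stay disjoint, and the "rigid" configurations for $B$ — a simplex, or a pyramid over a non-simplicial facet — are the ones demanding the most care.
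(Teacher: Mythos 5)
Your reduction of the top layer is sound: choosing a facet $F=\co B_0$ of $\co B$ with supporting functional $\phi$, one indeed has $(A+B)\cap\{\phi=2c\}=A_0+B_0$, the induction in dimension $d-1$ applies to $(A_0,B_0)$, and your simplex computation $|A+B|=(d+1)m-\binom{|A\cap B|}{2}$ agrees with the paper's Lemma \ref{foeset} at $k=1$. But the proof is not complete: the step you yourself call the technical heart --- producing at least $(d+1)m-dk-d$ sums with $\phi<2c$ --- is only sketched, and the sketch does not work as stated. First, the disjointness claim fails: sums from $A_1+B$ are spread over all of $\{\phi<2c\}$, so the translate $A_0+b^*$ (with $b^*\in B_1$ maximizing $\phi$) does not lie ``strictly below every sum already counted''; elements $a_1+b_0$ with $a_1\in A_1$, $b_0\in B_0$ and $\phi(a_1)=\phi(b^*)$ can collide with it. Second, even granting disjointness, the numbers fall short: combining the facet bound $dk-\binom{d}{2}$, the recursive bound $(d+1)(m-k)-\binom{d+1}{2}$ for $(A_1,B)$, and the $k$ extra elements of $A_0+b^*$ gives only
\[
\Bigl(dk-\tbinom{d}{2}\Bigr)+\Bigl((d+1)(m-k)-\tbinom{d+1}{2}\Bigr)+k=(d+1)m-\tbinom{d+1}{2}-\tbinom{d}{2},
\]
which misses the target by $\binom{d}{2}$; the inductive losses are being paid twice (once on the facet, once in the recursive call), so a plain induction on the theorem itself cannot close the argument --- you would need a strengthened hypothesis (e.g.\ a vertex version in the spirit of Theorem \ref{csucs}, or a bound tracking $|A\cap B|$ as in your simplex formula) to make a layer-by-facet induction balance.

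For comparison, the paper avoids this bookkeeping problem entirely: it triangulates $\co B$ into simplices $S_i$ with vertex sets $B_i\subset B$ in regular position (Lemma \ref{felbontas}), partitions $A$ by $A_i=A\cap(S_i\setminus(S_1\cup\dots\cup S_{i-1}))$, proves the sets $A_i+B_i$ are pairwise disjoint, and applies the exact simplex count: since each $S_i$ with $i\ge2$ shares a $(d-1)$-face with an earlier simplex, $|A_i\cap B_i|\le1$ and that simplex contributes fully, so the loss $\binom{d+1}{2}$ is incurred only once, on $S_1$. Your facet-induction idea is genuinely different and could conceivably be repaired along the lines above, but as written it has a real gap at exactly the step carrying all the difficulty.
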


     Finally we extend it to several summands as follows. We use $kB=B+\dots +B$
to denote repeated addition. As far as we know even the case of $A=B$ seems to
be new here.

     \begin{Th} \label{tobb}
     Let $A, B\subset  {\mathbb {R}}^d$ be  finite sets, $|A|=m$. Assume that $B$ is
proper $d$-dimensional and  $A \subset  \co B$. Let $k$ be a positive integer.
We have
     \begin{equation} \label{fo}
     |A+kB| \geq  m \binom{d+k}{k}  - k \binom{d+k}{k+1}
     = \left(  m - {kd \over  k+1}\right)  \binom{d+k}{k}.  \end{equation}
     \end{Th}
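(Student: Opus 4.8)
The plan is to prove \eqref{fo} by induction on the pair $(d,k)$, allowing ourselves in the inductive step the statement for every smaller dimension (with any $k$) and for $(d,k-1)$. Write $g(n,e,j):=n\binom{e+j}{j}-j\binom{e+j}{j+1}$ for the right side of \eqref{fo}, so the claim is $|A+kB|\ge g(m,d,k)$. The case $k=1$ is exactly Theorem~\ref{ketto}, and $d=0$ is trivial ($m=1$ and both sides equal $1$); the case $d=1$ is also immediate, since $\co B=[\beta_0,\beta_1]$ and $kB\supseteq\{\,k\beta_0+j(\beta_1-\beta_0):0\le j\le k\,\}$, so $A+kB$ contains $k+1$ translates of $A$ by consecutive multiples of $\beta_1-\beta_0$, each lying in an interval of length $\beta_1-\beta_0$, whence $|A+kB|\ge(k+1)m-k=g(m,1,k)$.

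For the step ($d\ge2$, $k\ge2$), choose a linear functional $\ell$ and a constant $c$ with $\co B\subseteq\{\ell\le c\}$ and $H:=\{\ell=c\}$ the affine span of a facet of $\co B$. Put $B_1=B\cap H$, $B_2=B\setminus B_1\ne\emptyset$, $A_1=A\cap H$, $A_2=A\setminus H$, $m_i=|A_i|$; then $B_1$ is proper $(d-1)$-dimensional in $H$ and $A_1\subseteq\co B\cap H=\co B_1$. The ``top slice'' is clean: $(A+kB)\cap\{\ell=(k+1)c\}=A_1+kB_1$, because $\ell(a)+\sum_{i=1}^k\ell(b_i)=(k+1)c$ with all summands $\le c$ forces $a\in A_1$ and every $b_i\in B_1$; regarding $H$ as $\mathbb R^{d-1}$, the inductive hypothesis gives $|A_1+kB_1|\ge g(m_1,d-1,k)$. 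Two applications of Pascal's rule (to $\binom{d+k-1}{k}+\binom{d+k-1}{k-1}=\binom{d+k}{k}$ and $\binom{d+k-1}{k}+\binom{d+k-1}{k+1}=\binom{d+k}{k+1}$) yield the bookkeeping identity
\begin{equation*}
 g(m,d,k)=g(m_1,d-1,k)+(m_2-1)\binom{d+k-1}{k}+g(m,d,k-1),
\end{equation*}
so it remains to find, inside $R:=(A+kB)\cap\{\ell<(k+1)c\}$ (which is disjoint from the top slice), at least $(m_2-1)\binom{d+k-1}{k}+g(m,d,k-1)$ distinct elements.

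Fix $b_0\in B_2$ with $\ell(b_0)=\min_{\co B}\ell$. The translate $S_0:=A+(k-1)B+b_0\subseteq R$ lies in $\{\ell\le kc+\ell(b_0)\}$ and has $|S_0|=|A+(k-1)B|\ge g(m,d,k-1)$ by the $(d,k-1)$ case. For each $a\in A_2$ the translate $a+kB_1\subseteq R$ lies in the single hyperplane $\{\ell=\ell(a)+kc\}$, and $|a+kB_1|=|kB_1|\ge\binom{d+k-1}{k}$: indeed $kB_1=B_1+(k-1)B_1$, so the $(d-1,k-1)$ case together with $|B_1|\ge d$ gives $|kB_1|\ge g(d,d-1,k-1)=\binom{d+k-1}{k}$. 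Two such translates coming from $a,a'$ with $\ell(a)\ne\ell(a')$ lie in different hyperplanes, hence are disjoint, and each is disjoint from $S_0$ as soon as $\ell(a)>\ell(b_0)$. If $\ell$ happens to take $m_2$ distinct values on $A_2$, these translates supply $\ge m_2-1$ pairwise disjoint blocks of size $\binom{d+k-1}{k}$ avoiding $S_0$, and with the top slice and the identity this proves \eqref{fo}. (If $m_2=0$ the identity forces $g(m,d,k)\le g(m,d-1,k)+g(m,d,k-1)$, and the top slice $A+kB_1$ together with $S_0$ are disjoint and already number at least this much.)

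\emph{The main obstacle is the case where $\ell$ is not injective on $A_2$}: a value of $\ell$ hit by several points of $A_2$ still supplies, directly, only one copy of $kB_1$, so the count $(m_2-1)\binom{d+k-1}{k}$ is not reached by the crude construction above. I expect this to be overcome by a secondary induction on $|A_2|$: remove an extreme point $a^{*}$ of $\co A_2$, apply the statement to $A\setminus\{a^{*}\}$, and prove that readjoining $a^{*}$ creates at least $\binom{d+k-1}{k}+\binom{d+k-1}{k-1}=\binom{d+k}{k}$ new elements of $A+kB$ (this being exactly what the identity demands); the hard point is this last inequality, where the extremality of $a^{*}$ must be exploited through a carefully chosen auxiliary functional separating $a^{*}$ from $A\setminus\{a^{*}\}$ on the side of the facet $F$. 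The remaining ingredients — the Pascal identity, the degenerate cases $m_1=0$ and $m_2=0$, and the estimate $|kB_1|\ge\binom{d+k-1}{k}$ — are routine.
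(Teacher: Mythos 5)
Your slicing framework is internally consistent as far as it goes: the bookkeeping identity, the identification of the top slice with $A_1+kB_1$, and the estimate $|kB_1|\ge\binom{d+k-1}{k}$ all check out. But the proof is not complete, and the part you defer is not a routine verification -- it is the heart of the theorem. The deferred claim, that re-adjoining an extreme point $a^{*}$ of $\co A_2$ always creates at least $\binom{d+k}{k}$ new elements of $A+kB$, is false as stated. Take $A=B=\{b_0,\dots,b_d\}$ the vertex set of a simplex, $H$ the affine span of the facet $\{b_1,\dots,b_d\}$, and $a^{*}=b_0$ (an extreme point of $\co A_2$): since every point of $(k+1)B$ other than $(k+1)b_0$ is of the form $b_i+s$ with $i\ge 1$, $s\in kB$, the set $(A\setminus\{a^{*}\})+kB$ misses only the single point $(k+1)b_0$, so re-adjoining $a^{*}$ adds exactly one element, not $\binom{d+k}{k}$. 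The same degradation occurs whenever the chosen $a^{*}$ is a vertex of $B$ (or, more generally, shares its fractional barycentric data with other points of $A$), and one can arrange configurations where $\ell$ is non-injective on $A_2$ and all extreme points of $\co A_2$ are of this bad type. So the one-point-at-a-time induction cannot be closed without an argument that treats points of $A\cap B$ differently from the rest -- and nothing in your sketch supplies it. A secondary anchoring problem: your base case $k=1$ invokes Theorem~\ref{ketto}, but in the paper that theorem has no independent proof (it is precisely the case $k=1$ of the statement being proved), so your induction also lacks a proven starting point in $k$.

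The distinction your sketch is missing is exactly what the paper's argument is built around. The paper first proves an exact formula (Lemma~\ref{foeset}) for $|A+kB|$ when $B$ is a simplex, by writing each element of $A+kB$ in barycentric coordinates and separating integral and fractional parts; this shows that the deficit below $m\binom{d+k}{k}$ is caused only by the points of $A\cap B$, and is at most $k\binom{d+k}{k+1}$ in total. It then triangulates $\co B$ into simplices in regular position (Lemma~\ref{felbontas}), with each simplex after the first sharing a $(d-1)$-face with an earlier one, so that for every piece but the first one has $|A_i\cap B_i|\le 1$ and the exact, loss-free count applies; the loss term $k\binom{d+k}{k+1}$ is paid only once, on the first simplex. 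Your facet-slicing induction is a genuinely different route, but to complete it you would have to reprove, in some form, precisely this control on how points of $A$ lying in $B$ (or aligned with it in barycentric terms) contribute -- at which point the paper's simplex-plus-triangulation argument is the more direct path.
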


     The case $d=1$ of the above theorems is quite obvious. In \cite{m3} we
gave a less obvious result which compares a complete sum and its
subsums, which sounds as follows.

     \begin{Th}  \label{superadd}
     Let $A_1, \dots , A_k$ be finite, nonempty sets of integers. Let $A_i'$ be the set consisting of the smallest and the
     largest elements of $A_i$ (so that $1\leq |A_i'|\le2$). Put
     \[   S = A_1 + \dots  + A_k ,  \]
     \[   S_i = A_1 + \dots  + A_{i-1} + A_{i+1}+ \dots  + A_k ,  \]
     \[   S_i' = A_1 + \dots  + A_{i-1} + A_i' + A_{i+1}+ \dots  + A_k , \]
     \[   S' = \bigcup _{i=1}^k S_i' .  \]
     We have
     \begin{equation} \label{also}
     \left| S \right| \geq  \left|S' \right| \geq  {1\over k-1} \sum _{i=1}^k \left|S_i \right|
- {1\over k-1}.
     \end{equation}
     \end{Th}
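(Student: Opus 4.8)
The first inequality $|S|\ge|S'|$ is immediate: $S_i'=A_i'+S_i\subseteq A_i+S_i=S$ for every $i$, so $S'=\bigcup_iS_i'\subseteq S$. The work is in the second inequality, i.e. in $\sum_{i=1}^k|S_i|\le(k-1)|S'|+1$. My plan is to recast this in terms of ``gaps''. Translate each $A_i$ so that $\min A_i=0$, put $b_i=\max A_i$ and $n=b_1+\dots+b_k$, and assume every $b_i\ge1$ (a one-point summand contributes nothing to any of the sums and may be deleted, lowering $k$; the case $k=2$ is classical and is recalled below). Then $\min S_i=0$, $\max S_i=n-b_i$, $\min S'=0$, $\max S'=n$. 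For a finite set $X\subset\mathbb Z$ write $h(X)=(\max X-\min X+1)-|X|$ for the number of integers of $[\min X,\max X]$ missing from $X$. Substituting $|S_i|=(n-b_i+1)-h(S_i)$ and $|S'|=(n+1)-h(S')$, every term free of an $h(\cdot)$ cancels and the inequality becomes exactly
\[
  \sum_{i=1}^k h(S_i)\ \ge\ (k-1)\,h(S').
\]

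The idea for this gap inequality (which for $k=2$ is $|S'|\ge|A_1|+|A_2|-1$) is an injective charging. If $y$ is a gap of $S'$ then, since $S'\supseteq S_i$ and $S'\supseteq b_i+S_i$, one has $y\notin S_i$ and $y-b_i\notin S_i$; hence $y$ is a genuine gap of $S_i$ whenever $b_i\le n-y$, and $y-b_i$ is a genuine gap of $S_i$ whenever $b_i\le y$. Call $i$ \emph{available} for $y$ if $b_i\le\max(y,n-y)$. As $\max(y,n-y)\ge n/2$ and $\sum_jb_j=n$, at most one $b_i$ exceeds $n/2$, so every gap of $S'$ has at least $k-1$ available indices. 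I want to choose, for each gap $y$ of $S'$ and each of $k-1$ available indices $i$, one of the gaps $y$, $y-b_i$ of $S_i$, so that for each fixed $i$ the gaps chosen are pairwise distinct; this is an injection from $k-1$ copies of the gap set of $S'$ into the disjoint union of the gap sets of the $S_i$, which is precisely the inequality above. Existence of such an injection follows from Hall's marriage theorem applied to the bipartite graph whose left vertices are the $k-1$ copies of each gap of $S'$, whose right vertices are the gaps of all the $S_i$, and in which a copy of $y$ is joined to the gap $z$ of $S_i$ whenever $z\in\{y,\,y-b_i\}\cap[0,n-b_i]$. (For $k=2$ one is explicit: send a gap $y\le b_1$ of $S'$ to the gap $y$ of $S_2=A_1$, and a gap $y>b_1$ to the gap $y-b_1$ of $S_1=A_2$.)

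The substantive step is Hall's condition. Since the $k-1$ copies of a gap have the same neighbourhood, it reduces to the following: for every set $W$ of gaps of $S'$,
\[
  \sum_{i=1}^k|N_i(W)|\ \ge\ (k-1)|W|,\qquad
  N_i(W)=\bigl(W\cap[0,n-b_i]\bigr)\cup\bigl((W\cap[b_i,n])-b_i\bigr).
\]
A short count gives $|N_i(W)|=2|W|-|W\cap[0,b_i)|-|W\cap(n-b_i,n]|-|W\cap(W-b_i)|$, so the condition is equivalent to
\[
  \sum_i|W\cap[0,b_i)|+\sum_i|W\cap(n-b_i,n]|+\sum_i|W\cap(W-b_i)|\ \le\ (k+1)|W|.
\]
Here a crude bound by counting indices only yields the factor $\tfrac{k-1}{2}$ instead of $k-1$; the real content is that $W$, being a set of gaps of $S'$, is disjoint from $S_i$ and from $b_i+S_i$ for every $i$ — equivalently $W\cap S_i'=\emptyset$, whence $|W|\le n+1-|S_i'|\le n-|S_i|$ (the last step using $S_i'=A_i'+S_i$ with $|A_i'|=2$) — and that $0,\,n-b_i\in S_i$ while $b_i,\,n\in b_i+S_i$. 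Turning this disjointness into the displayed bound on the three sums is the heart of the matter and the step I expect to be the main obstacle; it is exactly where the hypotheses that the $S_i$ are sumsets and that $S'$ is assembled from the extreme points of the summands get used.
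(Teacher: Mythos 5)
Your setup is correct as far as it goes: after translating so that $\min A_i=0$, the identities $|S_i|=(n-b_i+1)-h(S_i)$ and $|S'|=(n+1)-h(S')$ do convert the second inequality of \eqref{also} into $\sum_i h(S_i)\ge (k-1)h(S')$; the observation that a hole $y$ of $S'$ avoids $S_i'=S_i\cup(b_i+S_i)$, hence yields a hole $y$ or $y-b_i$ of $S_i$ whenever $b_i\le\max(y,n-y)$; and the remark that at most one $b_i$ can exceed $n/2$, so each hole of $S'$ has at least $k-1$ available indices, are all fine, as is the explicit $k=2$ assignment. But the argument stops exactly where the theorem begins. The whole content of the inequality is the existence of the injective charging, i.e.\ Hall's condition for your bipartite graph, and you do not prove it: you write down the neighbourhood count, note that the obvious estimate only yields a factor $(k-1)/2$, and then state that turning the disjointness $W\cap S_i'=\emptyset$ into the required bound ``is the heart of the matter and the step I expect to be the main obstacle.'' With that step missing, nothing beyond the trivial inclusion $S'\subseteq S$ has actually been established, so this is a genuine gap, not a stylistic one.

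Two further cautions. First, Hall's condition is a statement about \emph{every} subset $W$ of the holes of $S'$, which is formally stronger than the global count $\sum_i h(S_i)\ge(k-1)h(S')$ you need (that is just the case $W=$ all holes); you give no evidence that this stronger local statement holds, and it is genuinely tight: for $A_1=\dots=A_k=\{0,M\}$ one has $\sum_i h(S_i)=(k-1)h(S')$ exactly, and suitable $W$ (e.g.\ arithmetic progressions of holes with difference $M$) meet your neighbourhood bound with equality, so there is no slack left for a lossy verification. Second, the preliminary reduction ``delete one-point summands'' changes $k$ and hence the constant $\tfrac1{k-1}$, and the statement being proved concerns $S'$, not $S$; it does go through (for a singleton $A_k=\{c\}$ one gets $S'=c+A_1+\dots+A_{k-1}$, which dominates the $S'$ of the reduced system), but this needs a sentence, not a parenthesis. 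Note finally that the present paper does not prove Theorem \ref{superadd} at all; it quotes it from \cite{m3}, so your argument cannot be matched against a proof given here --- but in its current form it is an announced strategy rather than a proof.
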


     \begin{?} 
     Generalize Theorem \ref{superadd} to multidimensional sets. A proper
generalization should give the correct order of magnitude, hence the analog of
 \eqref  {also} could be of the form
     \[ \left| S \right| \geq  \left|S' \right| \geq  \left( {k^{d-1}\over (k-1)^d} - \varepsilon
\right) \sum _{i=1}^k \left|S_i \right|       \]
     if all sets are sufficiently large.
     \end{?}

     \begin{?} 
     Let $A, B_1, \dots , B_k \subset {\mathbb {R}}^d$ such that the $B_i$ are proper $d$-dimesional
and
     \[   A \subset  \co B_1 \subset  \co B_2 \subset  \dots  \subset  \co B_k .  \]
     Does the esimate given in \eqref  {fo} also hold for $A+B_1+\dots +B_k$?
     \end{?}

     This is easy for $d=1$.

     \section{A simplicial decomposition}

     We will need a result about simplicial decomposition.

     By a \emph{simplex} in ${\mathbb {R}}^d$ we mean a proper $d$-dimensional compact set
which is the convex hull of $d+1$ points.

     \begin{Def} 
     Let $S_1, S_2 \subset  {\mathbb {R}}^d$ be simplices, $B_i = \ve S_i$. We say that they are
in \emph{regular position}, if
     \[   S_1 \cap  S_2 = \co (B_1 \cap  B_2) , \]
     that is, they meet in a common $k$-dimensional face for some $k\leq d$. (This
does not exclude the extremal cases when they are disjoint or they
coincide.) We say that a collection of simplices is in regular
position if any two of them are.
     \end{Def}

     \begin{Lemma}  \label{felbontas}
     Let $B\subset {\mathbb {R}}^d$ be a proper $d$ dimensional finite set, $S=\co B$. There is a
sequence
$S_1, S_2, \dots , S_n$ of distinct simplices in regular position with the
following properties.

     a) $ S = \bigcup  S_i$.

     b) $B_i = \ve S_i = S_i \cap  B$.

     c) Each $S_i$, $2\leq i\leq n$ meets at least one of $S_1$, \dots , $S_{i-1}$ in a
$(d-1)$ dimensional face.
     \end{Lemma}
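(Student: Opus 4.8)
I would prove Lemma~\ref{felbontas} by induction on $|B|$, building the sequence of simplices greedily while maintaining the regular-position and face-connectivity conditions. The base case $|B| = d+1$ is immediate: $S = \co B$ is itself a simplex and the sequence consists of a single term. For the inductive step, the idea is to peel off one vertex of $S = \co B$ at a time, or rather to first triangulate and then worry about the order.

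\smallskip

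Here is how I would organize it. \emph{Step 1: produce a triangulation.} First I would establish that $S = \co B$ admits \emph{some} decomposition into simplices in regular position whose vertex sets are subsets of $B$ and which use up all of $B$ (condition (b)). One clean way: pick a point $b \in \ve B$; the set $B \setminus \{b\}$ still has $\co(B\setminus\{b\})$ of dimension $d$ (if it does — one must choose $b$ so that this holds, which is possible since $B$ has at least $d+1$ affinely independent points and removing one vertex cannot destroy full dimensionality as long as $|B| \ge d+2$; if $|B| = d+1$ we are in the base case). Actually a more robust route is the standard ``pulling'' triangulation: order the points of $B$ as $b_1, \dots, b_m$ with $b_1$ a vertex of $\co B$, and triangulate by pulling $b_1$ — every facet $F$ of $\co B$ not containing $b_1$ is recursively triangulated (within its own lower-dimensional hull) and coned from $b_1$. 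This is a classical construction and produces simplices in regular position with vertices in $B$; I would either cite it or spend a paragraph verifying the regular-position condition inductively on dimension. The condition $B_i = S_i \cap B$ (not merely $\subseteq$) requires that no point of $B$ lies in the interior or on a lower face of some $S_i$ without being a vertex of it; the pulling construction arranges exactly this because interior points get ``pulled out'' as vertices in the recursion.

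\smallskip

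\emph{Step 2: reorder for connectivity (condition (c)).} Given the unordered collection $\{S_1, \dots, S_n\}$ from Step 1, form the ``facet-adjacency graph'' $G$ whose vertices are the simplices and whose edges join two simplices sharing a $(d-1)$-dimensional face. I claim $G$ is connected: since $S = \bigcup S_i$ is convex hence connected, and since two simplices meeting only in a face of dimension $\le d-2$ cannot be the sole link across a ``cut'' (a standard fact — the union of finitely many $d$-simplices can only fail to be locally connected across a low-dimensional set if there is genuinely a separating hyperplane piece, which convexity forbids), any partition of the $S_i$ into two nonempty classes must have two simplices in different classes sharing a facet. Once $G$ is connected, I pick any spanning tree and order the simplices so that each $S_i$ ($i \ge 2$) is adjacent in the tree to some earlier $S_j$; that earlier neighbor is one of $S_1, \dots, S_{i-1}$ and shares a $(d-1)$-face with $S_i$, giving (c). Properties (a) and (b) are order-independent, so they survive the reordering.

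\smallskip

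\emph{Main obstacle.} The genuine work is in Step 1, specifically verifying that a pulling (or any) triangulation really is in \emph{regular position} in the strict sense defined here — i.e., any two simplices meet in a common face $\co(B_1 \cap B_2)$, not in some messier polytope. This is true for pulling triangulations but needs an honest inductive argument on $d$: when two simplices both contain the pulled vertex $b_1$, their intersection is the cone from $b_1$ over the intersection of their ``bases,'' which lie in facets of $\co B$ and are regular by induction; when one or both avoid $b_1$, they live in triangulations of facets (lower dimension, induction applies) or the analysis reduces to the same cone picture. I would also need to be slightly careful that the recursion terminates and that at each stage the relevant point set is still full-dimensional in the appropriate flat. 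The connectedness claim in Step 2 is intuitively clear from convexity but also deserves a line or two: if $S = X \cup Y$ with $X, Y$ unions of simplices from the collection meeting only along faces of dimension $\le d-2$, then $S$ minus that low-dimensional set would be disconnected, contradicting convexity of $S$.
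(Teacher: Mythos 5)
Your overall architecture (build a triangulation with vertices in $B$, then reorder via connectivity of the facet-adjacency graph) is close in spirit to the paper's: the paper gets property c) for free from its construction, but makes exactly your Step 2 remark afterwards, proving connectivity by a generic segment joining interior points of two simplices. The genuine gap is in Step 1. The construction you describe --- pull a single vertex $b_1$ and cone it over recursive triangulations of the facets of $\co B$ \emph{not} containing $b_1$ --- does not satisfy b). A point of $B$ lying in the interior of $\co B$, or in the relative interior of a facet that \emph{does} contain $b_1$, lies on no facet entering the recursion and is not $b_1$, so it is never used as a vertex of any simplex; it then sits in the interior or on a lower-dimensional face of some cone simplex $S_i$, and $S_i\cap B$ strictly contains $\ve S_i$. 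Your sentence ``interior points get pulled out as vertices in the recursion'' is precisely where the argument breaks: in the recursion as you set it up, such points never appear in any recursive call. Concretely, take $B$ to be the four vertices of a square together with its center and pull one corner: the center lands on the diagonal shared by the two triangles, violating b).

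To repair this you need a construction in which \emph{every} point of $B$ is at some stage the point being pulled/added. One option is the iterated pulling refinement (start from the trivial subdivision $\{\co B\}$ and pull the points of $B$ one at a time, each pull subdividing every cell containing that point); this does use all of $B$, but the regular-position check must then be done for the refinement step, not for the one-shot cone you analyze. The paper's route is simpler and avoids the issue you flagged about choosing $b$ with $\co(B\setminus\{b\})$ still $d$-dimensional: induct on $\ab B$, remove \emph{any} vertex $b$ of $B$, triangulate $\co(B\setminus\{b\})$ by induction (this is how interior points of $\co B$ eventually become vertices of smaller hulls and get incorporated), and then re-attach $b$ by coning it over those $(d-1)$-faces of the existing simplices that are completely visible from $b$ --- or over all the $S_i'$ when $B\setminus\{b\}$ is only $(d-1)$-dimensional, a case the paper treats separately rather than avoiding. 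Since $b\notin\co(B\setminus\{b\})$, property b) is preserved automatically, and the visible-cone step yields c) with no reordering; your Step 2 then survives only as the optional remark it is in the paper.
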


     We mentioned this lemma to several geometers and all answered ``of
course'' and offered a proof immediately, but none could name a reference with
this formulation, so we include a proof for completeness. This proof was
communicated to us by prof. K\'aroly B\"or\"oczki.

     \begin{proof} 
     We use induction on $ \ab B$. The case $ \ab B =2$ is clear. Let $|B|=k$, and assume we
know it for smaller sets (in any possible dimension).

     Let $b$ be a vertex of $B$ and apply it for the set $B'=B\setminus  \{b\}$. This
set may be $d$ or $d-1$ dimensional.

     First case: $B'$ is $d$ dimensional. With the natural notation let
     \[   S' = \bigcup _{i=1}^{n'} S_i' \]
     be the prescribed decomposition of $S' = \co B'$. We start the
decomposition of $S$ with these, and add some more as follows.

     We say that a point $x$ of $S'$ is \emph{visible} from $b$, if $x$ is the
only point of the segment joining $x$ and $b$ in $S'$. Some of the simplices
$S_i'$ have (one or more) $d-1$ dimensional faces that are completely visible
from $b$. Now if $F$ is such a face, then we add the simplex
     \[   \co (F \cup  \{b\})  \]
     to our list.

     Second case: $B'$ is $d-1$ dimensional. Again we start with the
decomposition of $S'$, just in this case the sets $S_i'$ will be $d-1$
dimensional simplices. Now the decomposition of $S$ will simply consist of
     \[   S_i = \co (S_i' \cup  \{b\}), \ n = n' .  \]
     \end{proof}

     The construction above immediately gave property c). We note that it is
not really an extra requirement, every decomposition has it after
a suitable rearrangement. This just means that the graph obtained
by using our simplices as vertices and connecting two of them if
they share a $d-1$ dimensional face is connected. Now take two
simplices, say $S_i$ and $S_j$. Take an inner point in each and
connect them by a segment. For a generic choice of these point
this segment will not meet any of the $\leq d-2$ dimensional faces
of any $S_k$. Now as we walk along this segment and go from one
simplex into another, this gives a path in our graph between the
vertices corresponding to $S_i$ and $S_j$.

     \section{The case of a simplex}

     Here we prove Theorem \ref{tobb} for the case $\ab B = d+1$.

     \begin{Lemma}  \label{foeset}
     Let $A, B\subset  {\mathbb {R}}^d$ be  finite sets, $|A|=m$, $\ab B =d+1$. Assume
that $B$ is
proper $d$-dimensional and  $A \subset  \co B$. Let $k$ be a positive integer. Write
$\ab {A\cap B} = m_1$.
 We have
     \begin{equation} \label{s1}
     |A+kB| = (m-m_1) \binom{d+k}{k}  + \binom{d+k+1}{k+1} -
\binom{d-m_1+k+1}{k+1} .  \end{equation}
     In particular, if $\ab{A\cap B} \leq 1$, then
     \begin{equation} \label{s2}
     |A+kB| = m \binom{d+k}{k} .   \end{equation}
     We have always
     \begin{equation} \label{s3}
     |A+kB| \geq  m \binom{d+k}{k}  - k \binom{d+k}{k+1} = \left(  m - {kd
\over  k+1}\right)  \binom{d+k}{k}.  \end{equation}
     \end{Lemma}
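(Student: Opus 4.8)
The plan is to compute $\ab{A+kB}$ exactly, exploiting the affine independence of $B=\{b_0,\dots,b_d\}$. Since translating $A$ or $B$, or applying a linear isomorphism of $\mathbb{R}^d$, leaves all the relevant cardinalities (and $m_1$) unchanged, I may assume $b_0=0$ and that $b_1,\dots,b_d$ is the standard basis. Then $kB=\{n\in\mathbb{Z}_{\ge0}^d:\ n_1+\dots+n_d\le k\}$, so $\ab{kB}=\binom{d+k}{k}$, and every point of $S=\co B$ has a unique representation $\sum_{i=1}^d t_i b_i$ with $t_i\ge0$, $\sum t_i\le1$. The observation that drives everything is this: a point $a\in A\subset S$ has all coordinates $t_i(a)$ integral precisely when $a\in B$ (then $t(a)$ is $0$ or a unit vector); otherwise $t_i(a)\in(0,1)$ for at least one $i$.

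First I would sort $A$ by the residues $\bigl(t_i(a)\bmod1\bigr)_{i=1}^d$. A point $a+w$ with $w\in kB$ has coordinate vector $\bigl(t_i(a)+n_i\bigr)_i$, whose fractional parts are exactly $\bigl(t_i(a)\bmod1\bigr)_i$; hence $a,a'\in A$ can produce a common element of $A+kB$ only when $t_i(a)\equiv t_i(a')\pmod1$ for every $i$. By the observation above, each $a\in A\setminus B$ is then alone in its residue class, while all of $A\cap B$ lies in the single ``integral'' class, and distinct classes contribute disjoint subsets of $A+kB$. Since a singleton $\{a\}$ contributes the translate $a+kB$ of size $\binom{d+k}{k}$, this gives $\ab{A+kB}=(m-m_1)\binom{d+k}{k}+\ab{(A\cap B)+kB}$.

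Next I would evaluate $\ab{(A\cap B)+kB}$. Identifying $jB$ with the set of tuples $(n_0,\dots,n_d)\in\mathbb{Z}_{\ge0}^{d+1}$ having $\sum n_i=j$ — the map $(n_i)\mapsto\sum n_ib_i$ being injective by affine independence and carrying addition to addition, with $b_i$ going to the $i$-th unit vector — the set $(A\cap B)+kB$ corresponds to those tuples with $\sum n_i=k+1$ in which at least one of the $m_1$ entries indexed by $A\cap B$ is positive. Passing to the complement within the $\binom{d+k+1}{k+1}$ tuples summing to $k+1$, the excluded ones are supported on the remaining $d+1-m_1$ coordinates, and there are $\binom{d-m_1+k+1}{k+1}$ of them; this yields \eqref{s1}. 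Formula \eqref{s2} is then the case $m_1\le1$ (for $m_1=0$ the bracket in \eqref{s1} vanishes; for $m_1=1$ one application of Pascal's rule does it).

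Finally, for \eqref{s3} I would view the right-hand side of \eqref{s1} as a function $f(m_1)$ and show it is non-increasing: Pascal's rule gives $f(m_1)-f(m_1+1)=\binom{d+k}{k}-\binom{d-m_1+k}{k}\ge0$, and this holds for every integer $m_1\ge0$. Since $m_1=\ab{A\cap B}\le d+1$, we get $\ab{A+kB}=f(m_1)\ge f(d+1)$, and summing the telescoped differences (a hockey-stick sum) gives $f(d+1)=m\binom{d+k}{k}-(d+1)\binom{d+k}{k}+\binom{d+k+1}{k+1}$, which equals the right-hand side of \eqref{s3} by the elementary identity $\binom{d+k+1}{k+1}=(d+1)\binom{d+k}{k}-k\binom{d+k}{k+1}$ (equivalently $(k+1)\binom{d+k}{k+1}=d\binom{d+k}{k}$). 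I expect the only genuinely delicate point to be the bookkeeping near the boundary of $S$ in the second paragraph — pinning down that the vertices are the only points of $S$ with integral barycentric coordinates, and that the residue classes really do contribute disjointly; the rest is manipulation of binomial coefficients.
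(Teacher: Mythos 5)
Your proposal is correct and follows essentially the same route as the paper: after your affine normalization, the coordinates $t_i(a)+n_i$ are exactly the paper's barycentric representation $\alpha_i+x_i$, the fractional-part/residue-class argument is the paper's disjointness step separating $A\cap B$ from $A\setminus B$, the complement count of tuples supported off $A\cap B$ is the paper's computation of $\ab{A_1+kB}$, and \eqref{s3} is again obtained by monotonicity of the formula in $m_1$ with the minimum at $m_1=d+1$. The only difference is cosmetic (standard simplex coordinates versus barycentric coordinates), and your binomial identities check out.
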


     \begin{proof}
     Put $A_1=A\cap B$, $A_2 = A \setminus  B$.
     Write $B=\{b_0, \dots , b_d\}$, arranged in such a way that
     \[   A_1 = A \cap  B = \{b_0, \dots , b_{m_1-1}\} . \]
     The elements of $kB$ are the points of the form
\[   s = \sum  _{i=0}^d x_i b_i, \ x_i \in {\mathbb {Z}}\ , x_i \geq 0, \ \sum  x_i = k , \]
     and this representation is unique. Clearly
     \[   \ab {kB} = \binom{d+k}{k} .  \]

     Each element of $A$ has a unique representation of the form
     \[   a = \sum  _{i=0}^k \alpha _i d_i, \ \alpha _i \in {\mathbb {R}}\ , \alpha _i \geq 0, \ \sum  \alpha _i = 1 , \]
\[   a = \sum  _{i=0}^d \alpha _i b_i, \ \alpha _i \in  {\mathbb {R}}\ , \alpha _i \geq 0, \ \sum  \alpha _i = 1 , \]
     and if $a\in A_1$, then some $\alpha _i=1$ and the others are equal to 0, while if
$a\in A_2$, then at least two $\alpha _i$'s are positive.

     Assume now that $a+s=a'+s'$ with certain $a,a'\in A$, $s,s'\in kB$. By
substituting the above representations we obtain
     \[   \sum  (\alpha _i+x_i) b_i = \sum  (\alpha '_i+x'_i) b_i, \ \sum  (\alpha _i+x_i) = \sum  (\alpha '_i+x'_i) =
k+1 , \]
     hence $\alpha _i+x_i =\alpha '_i+x'_i $ for all $i$. By looking at the integral and
fractional parts we see that this is possible only if $\alpha _i=\alpha '_i$, or one of
them is 1 and the other is 0. If the second possibility never happens, then
$a=a'$. If it happens, say $\alpha _i=1, \alpha '_i=0$ for some $i$, then $\alpha _j=0$ for all
$j\ne i$ and then each $a_j'$ must also be 0 or 1, that is, $a,a'\in A_1$.

     The previous discussion shows that $(A_1+kB) \cap  (A_2+kB)= \emptyset $ and the sets
$a+kB$, $a\in A_2$ are disjoint, hence
     \[   \ab{A+kB}= \ab{A_1+kB} + \ab{A_2+kB} \]
     and
     \begin{equation} \label{s4}
     \ab{A_2+kB} = \ab{A_2} \ab{kB} = (m-m_1) \binom{d+k}{k} . \end{equation}

     Now we calculate $\ab{A_1+kB}$. The elements of this set are of the form
     \[    \sum  _{i=0}^d x_i b_i, \ x_i \in {\mathbb {Z}}\ , x_i \geq 0, \ \sum  x_i = k+1 , \]
     with the additional requirement that there is at least one subscript $i$,
$i\leq m_1-1$ with $x_i\geq 1$. Without this requirement the number would be the same
as
     \[   \ab {{(k+1)}B} = \binom{d+{k+1}}{{k+1}} .  \]
     The vectors
     $(x_0, \dots , x_d)$ that violate this requirement are those
that use only the last $d-m_1$ coordinates, hence their number is
     \[    \binom{d-m_1+{k+1}}{{k+1}} .  \]
     We obtain that
     \[ \ab{A_1+kB} = \binom{d+{k+1}}{{k+1}} - \binom{d-m_1+{k+1}}{{k+1}} .\]
     Adding this formula to \eqref  {s4} we get \eqref  {s1}.

     If $m_1=0$ or 1, this formula reduces to the one given in \eqref  {s2}.

     To show inequality \eqref  {s3}, observe that this formula is a decreasing
function of $m_1$, hence the minimal value is at $m_1=d+1$, which after an
elementary transformation corresponds
to the right side of \eqref  {s3}. Naturally this is attained only if $m\geq d+1$, and
for small values of $m$ the right side of \eqref  {s3} may even be negative.
     \end{proof}

     \section{The general case}

     \begin{proof}[Proof of Theorem \ref{tobb}]

     We apply Lemma \ref{felbontas} to our set $B$. This decomposition induces
a decomposition of $A$ as follows. We put
     \[   A_1 = A \cap  S_1, A_2 = A \cap  (S_2\setminus S_1), \dots , A_n = A \cap  \bigl( S_n \setminus
(S_1 \cup  S_2 \cup  \dots  \cup  S_{n-1}) \bigr) . \]
     Clearly the sets $A_i$ are disjoint and their union is $A$.
     Recall the notation $B_i=\ve S_i$.

     We claim that the sets $A_i+kB_i$ are also disjoint.

     Indeed, suppose that $a+s=a'+s'$ with $a\in A_i$, $a'\in A_j$, $s\in kB_i$,
$s'\in kB_j$, $i<j$. We have
     \[   {a+s \over  k+1} \in  S_i , \ {a'+s' \over  k+1} \in  S_j , \]
     and these points are equal, so they are in
     \[   S_i \cap  S_j = \co (B_i \cap  B_j) .  \]
     This means that in the unique convex representation of $ (a'+s')/(k+1)$ by
points of $B_j$ only elements of $B_i\cap  B_j$ are used. However, we can obtain
this representation via using the representation of $a'$ and the components of
$s'$, hence we must have $a'\in  \co (B_i\cap B_k) \subset  S_i$, a contradiction.

     This disjointness yields
     \[   \ab {A+kB}  \geq  \sum  \ab {A_i+kB_i} . \]

     We estimate the summands using Lemma \ref{foeset}.

     If $i>1$, then $\ab {A_i \cap  B_i} \leq 1$. Indeed, there is a $j<i$ such that
$S_j$ has a common $d-1$ dimensional face with $S_i$, and then the
$d$ vertices of this face are excluded from $A_i$ by definition.
So in this case \eqref  {s2} gives
     \[ |A_i+kB_i| = \ab {A_i} \binom{d+k}{k} .    \]
     For $i=1$ we can only use the weaker estimate \eqref  {s3}:
     \[ |A_1+kB_1| \geq  \ab{A_1} \binom{d+k}{k}  - k \binom{d+k}{k+1} .   \]
     Summing these equations we obtain \eqref  {fo}.
     \end{proof}

     {\bf Acknowledgement.} The authors profited much from discussions with
Katalin Gyarmati and K\'aroly B\"or\"oczky. In particular, we are indebted to Prof.
B\"or\"oczky for the present version of the proof of Lemma \ref{felbontas}.



 %
 %

     \end{document}